\numberwithin{equation}{section}
\newtheorem{thm}{Theorem}
\newtheorem{lem}[thm]{Lemma}
\newtheorem{prop}[thm]{Proposition}
\newtheorem{cor}[thm]{Corollary}
\newenvironment{proof*}{\vskip 2mm\noindent {}}{\hfill $\Box$ \vskip 2mm}
\newcommand{\ba}{\begin{align*}}
\newcommand{\ea}{\end{align*}}
\newcommand{\R}{\mathbb{R}}
\newcommand{\C}{\mathbb{C}}
\newcommand{\Z}{\mathbb{Z}}
\newcommand{\eps}{\varepsilon}
\newcommand{\f}{\varphi}
\renewcommand\d{\delta}
\newcommand{\be}{\begin{equation}}
\newcommand{\ee}{\end{equation}}
\newcommand{\dist}{\mathop{\rm dist \,}}
\title{Boundary regularity for the distance functions, and the eikonal equation}
\author{Nikolai Nikolov}
\address{N. Nikolov\\Institute of Mathematics and Informatics\\Bulgarian Academy
of Sciences\\Acad. G. Bonchev 8, 1113 Sofia, Bulgaria
\vspace{1mm}
\newline Faculty of Information Sciences\\
State University of Library Studies and Information Technologies\\
Shipchenski prohod 69A, 1574 Sofia,
Bulgaria}
\email{nik@math.bas.bg}
\author{Pascal J. Thomas}
\address{P.J. Thomas\\
Institut de Math\'ematiques de Toulouse; UMR5219 \\
Universit\'e de Toulouse; CNRS \\
UPS, F-31062 Toulouse Cedex 9, France}
\email{pascal.thomas@math.univ-toulouse.fr}
\thanks{The  first named author was partially supported by the Bulgarian National
Science Fund, Ministry of Education and Science of Bulgaria under contract KP-06-N52/3.}
\keywords{eikonal equation, distance function, defining function, gain in regularity}
\subjclass[2020]{35F20, 35F21, 35B65}
\begin{document}

\begin{abstract}
We  study the gain in regularity of the distance to the boundary of a domain in $\R^m$.
In particular,
we show that if the signed distance function happens to be merely differentiable
in a neighborhood of a boundary point, it and the boundary have to be $\mathcal C^{1,1}$
regular. Conversely, we study the regularity of the distance function under
regularity hypotheses of the boundary.
Along the way, we point out that any solution to the eikonal equation, differentiable everywhere in
a domain of the Euclidean space, admits a gradient which is locally Lipschitz.
\end{abstract}

\maketitle

\section{Introduction}
\label{intro}

Let $D\subsetneq \R^m$ be a domain ($m\ge 2$). The distance to its
boundary is denoted $\d_D(x):= \min_{y\in\partial D} |x-y|$.
The \emph{signed distance function} to $\partial D$ is defined by
 $d_D:=\d_D$ on $D$ and $d_D:=-\d_D$ on $\R^m\setminus D.$

Throughout this note, we write $\langle\cdot,\cdot\rangle$ for the usual Euclidean
inner product in $\R^m$, and $|x|:=\langle x,x\rangle^{1/2}$ for the Euclidean norm.

Our main goal is to prove that the
signed distance function has a bootstrap property: if $d_D$ is
differentiable in an open set $U$, it must be $\mathcal C^{1,1}$ regular
(Corollary \ref{main}). This is a generalization of results obtained
in a succession of previous works, notably \cite{Fed, Luc, KP, CC}.

We achieve this in Section \ref{boot} by proving that $d_D$ satisfies the \emph{eikonal equation}
 $|\nabla d_D(x)|=1$ where it is differentiable. This is easy and well-known away from $\partial D$,
 and we show how to extend it to the boundary in Proposition \ref{bdrypt}.
 We then bring to bear previous works about the eikonal equation, summed up
 in Proposition \ref{c11}.

 While the eikonal equation may seem a more general hypothesis, Caffarelli and Crandall
 \cite{CC} proved that, up to additive constant, solutions of the eikonal equation
 are actually distance functions \cite[(1.9) and Proposition 4.4]{CC}, a result recalled here in
 Lemma \ref{eikondist}.  Where it is defined, $\nabla d_D(x)$ is a divergence-free 
 unit vector field. Gains in regularity also occur in this more
general case of vector fields; for the case $m=2$, \cite[Theorem 1]{Ign1} shows that 
if $D$ is a domain in $\R^2$,
any such vector field which is in the Sobolev space $W^{1/p, p} (D)$
for some $p\in [1,2]$ must be locally Lipschitz continuous
inside $D$ (which means locally the gradient of a $\mathcal C^{1,1}$
potential) except at a locally finite number of singular points. One can
also find in \cite{Ign1} a wealth of examples on those topics, including
cases where the $\mathcal C^{1,1}$ potential cannot be of class $\mathcal C^2$, and
a simply connected domain which allows for an infinite number of said singular points. 

Then in Section \ref{bdry}, we turn to results about
the distance function under hypotheses on the regularity of $\partial D$.
To conclude, we prove Proposition \ref{c1}, a more precise estimate about the variation of
$d_D$.

\section{Bootstrap properties of $d_D$}
\label{boot}

In general, $d_D$ is a $1$-Lipschitz function. It has been known for a long
time that it inherits much of the smoothness of $\partial D$, see \cite{KP} and the references therein.
Precisely, when $\partial D$ is $\mathcal C^k$-smooth, $k\ge 2$,
so is $d_D$ in a neighborhood of $\partial D$;
the same is true for $k=1$ under the additional hypothesis that $\partial D$ is of
\emph{positive reach}, that is, that
$x$ admits a unique projection to $\partial D$ whenever the distance from $x$ to $\partial D$ is less
than a certain uniform positive number \cite{Fed}.

Recall the following basic fact
(see e.g. \cite[Theorem 4.8]{Fed}).

\begin{prop}
\label{diffd}
Let $D\subsetneq \R^m$ be an open set. Then $d_D$ is differentiable at $x\not\in\partial D$
exactly when $x$ admits a unique projection $\pi(x)$ to $\partial D.$ In that case,
$\displaystyle\nabla d_D(x)=\frac{x-\pi(x)}{d(x)}.$
\end{prop}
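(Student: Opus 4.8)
The plan is to reduce everything to the unsigned distance $\d_D$. Since $x\notin\partial D$, $\d_D$ is continuous and strictly positive near $x$, so on a neighbourhood of $x$ one has $d_D=\sigma\,\d_D$ with $\sigma:=\operatorname{sgn}d_D(x)\in\{+1,-1\}$, and $d(x)=\sigma\,r$ with $r:=\d_D(x)>0$. Because $\sigma/r=1/(\sigma r)$, it then suffices to prove that $\d_D$ is differentiable at $x$ \emph{if and only if} the set $\Pi(x):=\{q\in\partial D:|x-q|=r\}$ of nearest points is a single point $\pi(x)$, and that in this case $\nabla\d_D(x)=(x-\pi(x))/r$. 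First I would note $\Pi(x)\ne\varnothing$: picking any $q_0\in\partial D$, the set $\partial D\cap\overline B(x,|x-q_0|)$ is compact and nonempty, and $q\mapsto|x-q|$ attains its minimum on it.

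For the ``only if'' implication, suppose $p_1,p_2\in\Pi(x)$ and set $v_i:=(x-p_i)/r$, which are unit vectors. For $0<t<r$ we have $|x-tv_i-p_i|=r-t$, hence $\d_D(x-tv_i)\le r-t$, while $\d_D(x-tv_i)\ge\d_D(x)-t=r-t$ since $\d_D$ is $1$-Lipschitz; so $\d_D(x-tv_i)=r-t$, and the one-sided directional derivative of $\d_D$ at $x$ in the direction $-v_i$ equals $-1$. If $\d_D$ were differentiable at $x$ with gradient $g$, this would force $\langle g,v_i\rangle=1$ for $i=1,2$; as $|g|\le1$ (again because $\d_D$ is $1$-Lipschitz), each equality $\langle g,v_i\rangle=1=|g|\,|v_i|$ is an equality case in Cauchy--Schwarz, so $v_1=g=v_2$ and therefore $p_1=p_2$.

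For the ``if'' implication, let $p:=\pi(x)$ be the unique nearest point and put $n(y):=|y-p|$, which is smooth near $x$ with $\nabla n(x)=(x-p)/r$. From $\d_D\le n$ on $\R^m$ and $\d_D(x)=n(x)$ one gets the upper estimate $\d_D(x+h)-\d_D(x)\le n(x+h)-n(x)=\langle(x-p)/r,h\rangle+o(|h|)$; the crux is the matching lower estimate $\d_D(x+h)\ge n(x+h)-o(|h|)$. Suppose it fails: there are $\eps>0$ (which we may take $\le 1$) and $h_k\to0$ with $\d_D(x+h_k)\le n(x+h_k)-\eps|h_k|$. Choosing $q_k\in\Pi(x+h_k)$, the triangle inequality gives $|x-q_k|\le r+(2-\eps)|h_k|$, so $|x-q_k|\to r$, while $|x-q_k|\ge r$ always; hence by compactness of $\partial D\cap\overline B(x,r+1)$ and by \emph{uniqueness} of the nearest point of $x$, a subsequence of $(q_k)$ converges to $p$. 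Squaring $|x+h_k-q_k|\le n(x+h_k)-\eps|h_k|$, expanding, using $|x-q_k|^2\ge r^2$ to discard that term, and absorbing the $O(|h_k|^2)$ errors, one arrives at $\langle q_k-p,h_k\rangle\ge c\,|h_k|$ with a constant $c>0$, for all large $k$ along the subsequence; hence $|q_k-p|\ge c>0$, contradicting $q_k\to p$. Thus $\d_D$ is differentiable at $x$ with gradient $(x-p)/r$, and combining with the reduction finishes the proof.

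I expect the lower estimate in the ``if'' implication to be the main obstacle: one must convert the \emph{qualitative} hypothesis ``unique nearest point'' into a \emph{quantitative} first-order statement, and the compactness/uniqueness argument above is the natural way to do it. An alternative that bypasses the $\eps$-argument is to observe that $\d_D(y)^2-|y|^2=\inf_{q\in\partial D}\bigl(|q|^2-2\langle y,q\rangle\bigr)$ is concave, so $y\mapsto|y|^2-\d_D(y)^2$ is convex and hence differentiable precisely where its subdifferential is a singleton; identifying the active affine minorants at $x$ with the elements of $\Pi(x)$ recovers the same characterisation. I would present the elementary argument but mention this route.
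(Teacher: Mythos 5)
Your proof is correct. There is nothing in the paper to compare it with: Proposition \ref{diffd} is quoted there as a known fact, with a pointer to \cite[Theorem 4.8]{Fed}, and no argument is given. Your reduction to the unsigned distance $\d_D$ near $x\notin\partial D$ is sound (a ball of radius $\d_D(x)$ about $x$ misses $\partial D$, so $d_D=\sigma\,\d_D$ there and the factor $\sigma$ passes through the formula). The necessity part is the standard equality-case-in-Cauchy--Schwarz argument and is fine. The sufficiency part is indeed the delicate step, and your quantitative argument goes through: squaring $|x+h_k-q_k|\le |x+h_k-p|-\eps|h_k|$ (both sides nonnegative), cancelling, and using $|x-q_k|^2\ge r^2$ gives $2\langle q_k-p,h_k\rangle\ge 2\eps|h_k|\,|x+h_k-p|-\eps^2|h_k|^2\ge \eps r|h_k|$ for large $k$, hence $|q_k-p|\ge \eps r/2>0$, contradicting $q_k\to p$, which you correctly extracted from compactness of $\partial D\cap\overline B(x,r+1)$ together with uniqueness of $\pi(x)$; combined with the upper bound $\d_D\le|\cdot-p|$ and smoothness of $|\cdot-p|$ away from $p$, this yields differentiability with the stated gradient. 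The alternative route you mention is the classical one: $y\mapsto \d_D(y)^2-|y|^2$ is an infimum of affine functions, hence concave, and differentiability of a convex function is equivalent to its subdifferential being a singleton, which is identified with the set of nearest points; this is essentially how the fact is usually proved in the literature (Federer, and texts on semiconcave functions). Either version would serve as a self-contained substitute for the citation; your elementary argument has the advantage of not invoking the subdifferential calculus for infima of affine families.
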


In particular, whenever $d_D$ is differentiable at $x \notin \partial D$,
$d_D$ satisfies the eikonal equation.
This equation has been much studied, see e.g.  \cite{CC}, \cite{DLI}.
It turns out to extend to the boundary when $\nabla d_D$ still makes sense in a neighborhood of a point; in fact it is enough to have differentiability at a boundary point and in
a one-sided neighborhood of it.

\begin{prop}
\label{bdrypt}
Let $D\subsetneq \R^m$ be an open set.
 If $d_D$ is differentiable at $p \in \partial D$ and in $U\cap D$, for
a neighborhood $U$ of $p$, then $|\nabla d_D(p)| =1$.
\end{prop}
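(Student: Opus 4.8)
Since $d_D$ is $1$-Lipschitz, $|\nabla d_D(p)|\le 1$ is automatic; write $v:=\nabla d_D(p)$ and recall $d_D(p)=0$. So the real content is the reverse inequality $|v|\ge 1$, and the plan is to feed the behaviour of $d_D$ on the part of $D$ near $p$ into the differentiability of $d_D$ at $p$. The key elementary fact is that for $q\in D$ with a (not necessarily unique) nearest boundary point $\pi(q)$, the point $\pi(q)$ sits on $\partial D$, so $d_D(\pi(q))=0$.

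Fix $q\in D$ close to $p$ and a nearest boundary point $\pi(q)$, and set $u:=(q-\pi(q))/d_D(q)$, a unit vector. Since $d_D(q)\le|q-p|$, we have $|\pi(q)-p|\le d_D(q)+|q-p|\le 2|q-p|$, so both $q$ and $\pi(q)$ tend to $p$ as $q\to p$. Applying differentiability of $d_D$ at $p$ at the two points $q$ and $\pi(q)$, and using $d_D(\pi(q))=0$, we get $d_D(q)=\langle v,q-p\rangle+o(|q-p|)$ and $\langle v,\pi(q)-p\rangle=o(|q-p|)$; subtracting and recalling $q-\pi(q)=d_D(q)\,u$,
\[
d_D(q)\bigl(1-\langle v,u\rangle\bigr)=o(|q-p|),\qquad\text{equivalently}\qquad 1-\langle v,u\rangle=o\!\left(\frac{|q-p|}{d_D(q)}\right).
\]
Hence, if we can find a sequence $q_n\to p$ in $D$ along which $|q_n-p|/d_D(q_n)$ stays bounded, then $\langle v,u_n\rangle\to1$ for the associated unit vectors $u_n$; since $\langle v,u_n\rangle\le|v|$ by Cauchy--Schwarz, this gives $|v|\ge1$ and, with the trivial bound, $|v|=1$. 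Such a sequence is immediate once $v\ne0$: put $q_n:=p+\tfrac1n\,\tfrac{v}{|v|}$, so that $d_D(q_n)=\tfrac{|v|}{n}+o(1/n)>0$ for large $n$ (hence $q_n\in D$) and $|q_n-p|/d_D(q_n)\to1/|v|$. Note that the one-sided differentiability hypothesis is used only through the existence of points of $D$ approaching $p$, together with the auxiliary Proposition~\ref{diffd}, in the step below.

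It remains to exclude $v=0$, which I expect to be the genuinely delicate point. If $v=0$ then $d_D(x)=o(|x-p|)$ as $x\to p$, so $D$ is asymptotically thin at $p$: every ball contained in $D\cap B(p,\rho)$ has radius $o(\rho)$ as $\rho\to0$. The plan to reach a contradiction is then: starting from $q\in U\cap D$ near $p$, inflate the ball $B(q,d_D(q))\subset D$ (which by Proposition~\ref{diffd} touches $\partial D$ only at $\pi(q)$) by sliding its centre away from $\pi(q)$ in the direction $u$ and enlarging the radius so as to keep the ball inside $D$ and tangent to $\partial D$ at $\pi(q)$, pushing this as far as possible. Either the boundary sphere of the limiting ball meets $\partial D$ at a second point, so its centre $c$ has two nearest boundary points and $d_D$ is not differentiable at $c$ by Proposition~\ref{diffd}; but the thinness of $D$ at $p$ forces the sliding distance to be $o(|q-p|)$, so $c\to p$ and thus $c\in U\cap D$, contradicting the hypothesis. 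Or the inflated balls converge to a ball internally tangent to $\partial D$ at $p$ itself, so an interior ball condition holds at $p$; then $d_D$ grows linearly along the radius from $p$ to the centre, contradicting $d_D(x)=o(|x-p|)$.

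The two points in the last paragraph that will need genuine care are: why the inflation cannot run off to infinity while the centre stays near $p$ (this is exactly where the quantitative thinness of $D$ at $p$ enters, via the estimate on the sliding distance), and the borderline case in which the supremal radius is not attained with a new contact point, where one would restart the construction from the enlarged (still regular) ball and has to control that the iteration terminates.
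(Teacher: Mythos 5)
Your reduction to the case $\nabla d_D(p)=0$ is correct and is in fact a nice alternative to the paper's treatment of $0<|\nabla d_D(p)|<1$: subtracting the first-order expansions of $d_D$ at $q$ and at a nearest boundary point $\pi(q)$, and then testing along $q_n=p+\tfrac1n v/|v|$, uses only differentiability at $p$, exactly as in the paper's coordinate argument, and the estimates you give are sound.

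The exclusion of $\nabla d_D(p)=0$, however, has a genuine gap, precisely at the two points you flag. The dichotomy at the maximal inflated ball $B(c_T,T)$ is incomplete: when you pass to the limit of witnesses of non-inclusion for radii $t>T$, the limit point may be $\pi(q)$ itself (one-sided higher-order tangency of $\partial D$ to the sphere at $\pi(q)$), so the maximal ball may meet $\partial D$ only at $\pi(q)$; then $c_T$ has a unique nearest boundary point and Proposition \ref{diffd} yields no non-differentiability, while the alternative branch (a ball internally tangent at $p$) does not occur either, since the limit ball is tangent at $\pi(q)$, not at $p$. Your proposed repair --- restarting from the enlarged ball --- makes no progress: $\nabla d_D(c_T)=(c_T-\pi(q))/T=u$, so the new sliding direction is the same ray through $\pi(q)$ and reproduces exactly the same maximal ball; the iteration is stuck. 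What closes this case is the fact that, as long as $d_D$ is differentiable in a neighborhood of the segment $\{\pi(q)+tu\}$, the gradient is constantly equal to $u$ along it and $d_D$ increases there at unit speed (this is \cite[Lemma 2.2]{CC}, or the paper's computation $d_D(\chi(s))=d_D(x)+s$ along the integral curve of $\nabla d_D$); if the sliding could not be continued past $T$ while $d_D$ remained differentiable near the segment, one would get $B(c_{T+\eps},T+\eps)\subset D$, contradicting maximality, so points of non-differentiability must accumulate on the segment, and your thinness estimate ($T=o(|q-p|)$, hence the segment lies within $O(|q-p|)$ of $p$) then contradicts differentiability on $U\cap D$. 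This unit-speed growth along characteristics is the heart of the paper's proof and is the missing ingredient in your sketch; without it (or an equivalent substitute) the case analysis does not terminate.
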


\begin{proof}
At first, suppose only that $d_D$ is differentiable at a point $p\in \partial D$. 

We then prove that if $\nabla d_D(p)\neq 0$, then $|\nabla d_D(p)|=1$. 
Since $d_D$ is $1$-Lipschitz, $|\nabla d_D(p)|\le 1$. Take coordinates
in an orthonormal basis such that $p=0$ and $\nabla d_D(p)=\alpha e_1
=(\alpha,0)$, $\alpha \in (0,1]$.

Suppose $\alpha<1$.
Then for  any small enough  $x_1>0$, $d_D(x_1 e_1) >0$ so $x_1\in D$ and 
$|\pi(x_1 e_1)-x_1 e_1|< (1+\alpha) x_1/2$, where $\pi(x_1 e_1)$ is any
of the closest points to $x_1 e_1$ on $\partial D$. Let $q\in D$ be such that 
$|\pi(x_1 e_1)-q|< \alpha(1-\alpha)x_1/4$;
we must have
\[
q_1> x_1-\frac{1+\alpha}2 x_1 - \alpha \frac{1-\alpha}4 x_1 =
(1-\frac\alpha2)\frac{1-\alpha}2 x_1,
\]
and
\[
|q|\le x_1 +   \frac{1+\alpha}2 x_1 + \alpha \frac{1-\alpha}4 x_1=Cx_1.
\]
Finally
\[
\frac\alpha4 (1-\alpha)x_1 > d_D(q) = \alpha q_1 + o(|q|)
\ge \alpha (1-\frac\alpha2)\frac{1-\alpha}2 x_1 + o(x_1),
\]
a contradiction, so $|\nabla d_D(0)|=1$.

Now we are reduced to excluding the case $|\nabla d_D(p)| =0$ when $d_D$
is differentiable on $D\cap U$.  

Again, we may assume $p=0$. If $|\nabla d_D(0)| =0$, then for any $\eps>0$, there exists $\delta_\eps >0$
such that if $|x|<\d_\eps$, then $|d_D(x)| < \eps |x|$.

For some $x\in D$ such that $d_D$  is differentiable in a neighborhood of $x$, and therefore
admits a continuous gradient there by the formula in Proposition \ref{diffd}, we 
consider the integral curve of the vector field $\nabla d_D$ with starting point at $x$, i.e.
the map $\chi: [0, T)\longrightarrow D$ such that $\chi'(t)= \nabla d_D(\chi(t))$, $\chi(0)=x$
(this is known as a \emph{characteristic} of the function $d_D$ and will turn out to be an affine map \cite[Lemma 2.2]{CC}, but we shall not need this fact at this point).

Notice that $d_D(x)>0$ by hypothesis and $d_D(\chi(t))$ can only increase as $t$ grows,
as long as it is defined, so $\chi(t)\in D$ and the curve must meet a point
of non-differentiability before it could escape $D$. Let
\[
t_x:= \sup\left\{t: d_D \mbox{ is differentiable in a neighborhood of }\chi(s), 0\le s \le t \right\}.
\]

By construction, for any $\eta>0$, there is a point $x' \in D$ where $d_D$ is not differentiable
and $|x'-\chi(t_x)|<\eta$. By the differential equation,
for $0\le s\le t_x$,
$$
d_D(\chi(s))= d_D(x)+\int_0^{s} \langle \nabla d_D (\chi(t)), \chi'(t)\rangle dt
= d_D(x) + \int_0^{s} | \nabla d_D (\chi(t))|^2  dt =d_D(x)+ s.
$$

We claim that for any $\eps>0$, there exists $y\in B(0,\eps)\cap D$ such that $d_D$
is not differentiable at $y$.
For any $\eps \in (0,\frac14)$, choose $\delta $
small enough so that $\d < \eps/4$, $2\d < \d_{\eps}$.  Let $x\in B(0,\d)\cap D$.
If $d_D$ is not differentiable at $x$, we can take $y=x$.

If it is, choose $x'$
as above with $\eta:=\eps/10$. We claim that $t_x \le  |x|$. Indeed, if
$t_x > |x|$, then for  $s=   |x|$, $d_D(\chi(s))\ge d_D(x)+|x|$.
On the other hand, $|\chi(s)| \le 2 |x| < \d_\eps$, so that
$d_D(\chi(s))< \eps |\chi(s)| < \frac12 |x|$, a contradiction.

Finally $|x'|\le |x| + t_x + \eta < \eps$, and we can take $y=x'$.
\end{proof}

Notice that the case where $\nabla d_D (p)=0$ can occur.  We give an example in $\R^2$,
which we identify with $\C$ in order to use polar coordinates.

Let $f: [0,\infty) \longrightarrow (0,\infty)$ be a continuous strictly
decreasing function satisfying 
\newline $\lim_{\theta\to\infty} f(\theta) = 0$. 
Let 
\[
\Omega:= \{ re^{i\theta}: 0<\theta, f(\theta+\pi)<r<f(\theta)\}. 
\]
The domain $\Omega$ looks like a thickened spiral and 
for any $\alpha$, 
\[
\Omega \cap \R_+ e^{i\alpha} = \{ re^{i\alpha}: 
f (\alpha+(2k+1)\pi) < r < f (\alpha+2k\pi), k\in \Z\}.
\]

\begin{prop}
\label{gradzero}
If $\lim_{\theta\to\infty} \frac{f(\theta)}{f(\theta+\pi)}=1$,
then $\Omega$ is a connected, simply connected domain such that
at the point $0\in \partial \Omega$, the signed distance function $d_\Omega$
is differentiable and has gradient equal to $0$.
\end{prop}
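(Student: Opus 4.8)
The plan is to verify the three claimed properties of $\Omega$ in turn, leaving the differentiability statement for last since that is where the hypothesis $\lim_{\theta\to\infty} f(\theta)/f(\theta+\pi)=1$ actually gets used.

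First I would check that $\Omega$ is connected and simply connected. Parametrizing a point as $re^{i\theta}$ with $\theta>0$ ranging over all of $(0,\infty)$ (not just one period), the constraint $f(\theta+\pi)<r<f(\theta)$ describes, for each $\theta$, an open radial interval; since $f$ is continuous and strictly decreasing these intervals vary continuously, and consecutive "turns" overlap because $f(\theta+\pi)<f(\theta)$ forces the interval at angle $\theta$ and the interval at angle $\theta+\pi$ (which is $(f(\theta+2\pi),f(\theta+\pi))$) to share the radius $f(\theta+\pi)\pm$, so the union is path-connected. For simple connectedness I would argue that $\Omega$ is starlike with respect to "radial retraction composed with angular unwinding", or more cleanly, exhibit an explicit homeomorphism of $\Omega$ onto a convex strip: the map $re^{i\theta}\mapsto (\theta, r)$ (or $(\theta, (r-f(\theta+\pi))/(f(\theta)-f(\theta+\pi)))$) is a homeomorphism of $\Omega$ onto an infinite strip-like region in $(0,\infty)\times\R$ which is clearly simply connected; one must check this map is well-defined, i.e.\ that $\theta$ is recovered unambiguously from $re^{i\theta}\in\Omega$, which holds because the radial intervals at angles differing by a full $2\pi$ are disjoint (as $f(\theta)<f(\theta-\pi)$ implies $f(\theta)\le f(\theta+2\pi+\pi)$... more precisely the intervals $(f(\theta+\pi),f(\theta))$ and $(f(\theta+2\pi+\pi),f(\theta+2\pi))$ do not overlap since $f(\theta+2\pi)<f(\theta+\pi)$). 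That $0\in\partial\Omega$ is immediate from $\lim_{\theta\to\infty}f(\theta)=0$.

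The heart of the matter is showing $d_\Omega$ is differentiable at $0$ with $\nabla d_\Omega(0)=0$; by definition this means $d_\Omega(x)=o(|x|)$ as $x\to 0$. Since $0\in\partial\Omega$, the sign of $d_\Omega(x)$ depends on whether $x\in\Omega$, so it suffices to prove $\delta_\Omega(x)=o(|x|)$, i.e.\ every point near $0$ is close (relative to its distance from $0$) to $\partial\Omega$. Write $x=re^{i\theta}$ with $r$ small; then $\theta$ must be large (of order the number of spiral turns, roughly $\sim r^{-1}$-ish depending on $f$, but in any case $\theta\to\infty$ as $r\to 0$ along any sequence). The key geometric fact is that the "width" of the spiral band at angular position $\theta$, measured radially, is $f(\theta)-f(\theta+\pi)$, and the hypothesis $f(\theta)/f(\theta+\pi)\to 1$ says exactly that this width is $o(f(\theta))$, hence $o(r')$ for radii $r'$ near $f(\theta)$. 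So for $x\in\Omega$ at radius $r$, the nearest boundary point is within radial distance $\le f(\theta)-f(\theta+\pi)=o(f(\theta))=o(r)$ (taking the boundary arc at the same angle $\theta$), giving $\delta_\Omega(x)\le o(r)=o(|x|)$. For $x\notin\Omega$ near $0$, $x$ lies in one of the complementary bands $f(\theta)<r<f(\theta-\pi)$ (or in $\{0<r<f(\theta)\text{ for all admissible }\theta\}$, i.e.\ the innermost region), and the same width estimate applies: the complementary band at angle $\theta$ has radial width $f(\theta-\pi)-f(\theta)=o(f(\theta-\pi))=o(r)$; the innermost region is handled because its outer boundary at angle $\theta$ has radius $f(\theta)$ which can be taken $\to 0$ so a point there at radius $r$ is within $f(\theta)-r\to$ small, in fact one should be slightly careful and note that to be at distance $<\varepsilon r$ from $0$ and outside all bands one can choose the boundary point on the nearest spiral arc. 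Assembling these, $|d_\Omega(x)|/|x|\to 0$, so $d_\Omega$ is differentiable at $0$ with vanishing gradient.

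The main obstacle I anticipate is the bookkeeping in the last paragraph: one must be careful that the angular coordinate $\theta$ of a point $x=re^{i\theta}$ near $0$ is genuinely forced to be large (this requires noting that $r<f(\theta)$ and $f$ decreasing and $\to 0$ force $\theta$ large when $r$ is small — uniformly, in the sense that $r<\eta$ implies $\theta>\Theta(\eta)$ with $\Theta(\eta)\to\infty$), and that the estimate "radial distance to a boundary arc at the same angle" genuinely controls the Euclidean distance $\delta_\Omega(x)$ — the radial segment from $x$ to the boundary has Euclidean length equal to the radial width, so this is fine, but one should say it. A secondary subtlety is the innermost complementary region $\{re^{i\theta}: 0<r<f(\theta)\ \forall\theta>0 \text{ relevant}\}= \{|z|<\inf_\theta f(\theta)\}$-type set near $0$: but as $\inf f=0$ (since $f\to 0$), actually near $0$ every punctured neighborhood meets $\Omega$, and a point $x\notin\overline\Omega$ close to $0$ at radius $r$ has on its own circle $|z|=r$ both points of $\Omega$ and the boundary within a bounded angular distance, again giving $\delta_\Omega(x)=o(|x|)$ after using $f(\theta)/f(\theta+\pi)\to1$. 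I would organize all of this as: fix $\varepsilon>0$, choose $\Theta$ so that $\theta\ge\Theta\Rightarrow f(\theta)\le(1+\varepsilon)f(\theta+\pi)$ and $f(\theta+\pi)\ge(1-\varepsilon)^{-1}$... (adjust), then choose $\eta$ so that $|x|<\eta\Rightarrow$ the angle of $x$ exceeds $\Theta$, and conclude $\delta_\Omega(x)\le C\varepsilon|x|$.
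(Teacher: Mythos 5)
Your proposal is correct and takes essentially the same route as the paper: unroll $\Omega$ onto the strip $\Omega_0=\{(\theta,r): f(\theta+\pi)<r<f(\theta)\}$ (well defined because the angle $\theta$ of a point of $\Omega$ is uniquely determined) to get connectedness and simple connectedness, and bound $\delta_\Omega(re^{i\theta})$ by the radial width $f(\theta)-f(\theta+\pi)$ while $|z|\ge f(\theta+\pi)$, so that $d_\Omega(z)/|z|\le f(\theta)/f(\theta+\pi)-1\to 0$. The paper dismisses points outside $\Omega$ with the same remark that the argument is similar, so your extra bookkeeping on the complementary bands is fine (and the ``innermost region'' you worry about is empty near $0$ since $f(\theta)\to 0$).
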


\begin{proof}
The domain $\Omega_0:=\{ (r,\theta) \in (0,\infty)\times (0,\infty): 0<\theta, f(\theta+\pi)<r<f(\theta)\}$ is clearly connected and simply connected.

The map $(r,\theta) \mapsto re^{i\theta}$ is continuous, so $\Omega$
is connected.  For a point $re^{i\theta} \in \Omega$, the condition 
$f(\theta+\pi)<r<f(\theta)$ means that there is a single possible choice 
of $\theta$, therefore the inverse map is well defined and continuous, so 
$\Omega$ is homeomorphic to $\Omega_0$.

Let $z=re^{i\theta} \in \Omega$. Considering the line segment 
$[e^{i\theta} f(\theta+\pi) ; e^{i\theta} f(\theta)]$ we see that 
$d_\Omega(z)\le   f(\theta)-f(\theta+\pi)$ and $|z|\ge  f(\theta+\pi)$.
Therefore 
\[
0\le \frac{d_\Omega(z)}{|z|}
 \le \frac{f(\theta)}{f(\theta+\pi)} -1 \to 0,
\]
as $|z|\to 0$ (which implies $\theta \to \infty$). 

For points outside of $\Omega$, the proof is similar.
\end{proof}

In the above example, between the two spirals that constitute $\partial \Omega$,
there is a spiral of points of $\Omega$ which are equidistant from the two components and
where $d_D$ is not differentiable.

The following is more or less implicit in \cite{CC}.

\begin{prop}
\label{c11}
Let $\Omega \subset \R^m$ be an open set and
$\varphi: \Omega \rightarrow \mathbb{R}$ be an everywhere differentiable
solution of the eikonal equation $|\nabla \varphi|=1$ in $\Omega$. Then
$\varphi\in \mathcal C^{1,1}(\Omega)$, i.e. $\nabla \varphi$ is locally Lipschitz in $\Omega$.

As a consequence, if  $d_D$ is differentiable in a neighborhood $U$ of
$\partial D,$ and $|\nabla d_D(x)|=1$ for $x\in U\cap\partial D$, then
$d_D\in \mathcal C^{1,1}(U)$.
\end{prop}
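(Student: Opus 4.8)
The plan is to prove the main assertion — that an everywhere-differentiable solution $\varphi$ of the eikonal equation on an open set $\Omega\subset\R^m$ is automatically $\mathcal C^{1,1}_{loc}$ — and then deduce the boundary statement about $d_D$ by a short patching argument. The key geometric fact, which I would take from \cite{CC} (their Lemma~2.2, already alluded to in the proof of Proposition~\ref{bdrypt}), is that the characteristics of $\varphi$ are affine: if $\chi'(t)=\nabla\varphi(\chi(t))$ then $\chi(t)=\chi(0)+t\,\nabla\varphi(\chi(0))$ as long as it stays in $\Omega$, and along such a segment $\nabla\varphi$ is constant while $\varphi$ increases at unit speed. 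The first step is therefore to record this: through each point $x\in\Omega$ there is a maximal open segment $I_x$ (an integral curve of $\nabla\varphi$ run in both time directions, which is legitimate since $\nabla\varphi$ is continuous by the differentiability hypothesis plus the implicit identification with a projection-type formula, or simply by the fact established in \cite{CC}) on which $\varphi$ is affine with gradient $\nabla\varphi(x)$.

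The second step is the local Lipschitz estimate for $\nabla\varphi$. Fix a compact ball $\overline{B(x_0,2r)}\subset\Omega$ and let $\rho>0$ be a lower bound for the length that each characteristic through a point of $\overline{B(x_0,r)}$ can be extended backwards inside $\Omega$ — such a uniform $\rho$ exists by compactness, since each characteristic extends strictly inside $\Omega$ and the extension length is lower-semicontinuous. Now take two points $x,y\in B(x_0,r)$ and let $x^-=x-\rho\nabla\varphi(x)$, so $\varphi(x)=\varphi(x^-)+\rho$. Since $\varphi$ is $1$-Lipschitz, $\varphi(y)\ge\varphi(x^-)+\rho-|y-x^-|=\varphi(x)+\rho-|y-x+\rho\nabla\varphi(x)|$. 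On the other hand, differentiability of $\varphi$ at $x$ gives $\varphi(y)=\varphi(x)+\langle\nabla\varphi(x),y-x\rangle+o(|y-x|)$. Comparing the two and squaring $|y-x+\rho\nabla\varphi(x)|^2=\rho^2+2\rho\langle\nabla\varphi(x),y-x\rangle+|y-x|^2$, one extracts, after expanding the square root $\sqrt{\rho^2+2\rho\langle\nabla\varphi(x),y-x\rangle+|y-x|^2}=\rho+\langle\nabla\varphi(x),y-x\rangle+O(|y-x|^2/\rho)$, the bound $\langle\nabla\varphi(x),y-x\rangle\le\langle\nabla\varphi(y),y-x\rangle+o(|y-x|)+O(|y-x|^2/\rho)$ — i.e. a one-sided ("monotonicity") inequality $\langle\nabla\varphi(y)-\nabla\varphi(x),y-x\rangle\ge -C|y-x|^2$ with $C\sim1/\rho$. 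Symmetrizing in $x,y$ yields $|\langle\nabla\varphi(y)-\nabla\varphi(x),y-x\rangle|\le C|y-x|^2$; combined with $|\nabla\varphi|\equiv1$ (so the two unit vectors $\nabla\varphi(x),\nabla\varphi(y)$ satisfy $|\nabla\varphi(x)-\nabla\varphi(y)|^2=2-2\langle\nabla\varphi(x),\nabla\varphi(y)\rangle=2\langle\nabla\varphi(x)-\nabla\varphi(y),\nabla\varphi(x)\rangle$, handled by the same segment trick applied symmetrically) this gives $|\nabla\varphi(x)-\nabla\varphi(y)|\le C'|x-y|$ on $B(x_0,r)$. Hence $\nabla\varphi$ is locally Lipschitz, i.e. $\varphi\in\mathcal C^{1,1}(\Omega)$.

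For the consequence, suppose $d_D$ is differentiable on a neighborhood $U$ of $\partial D$ with $|\nabla d_D|=1$ on $U\cap\partial D$. On $U\setminus\partial D$ the eikonal equation holds by Proposition~\ref{diffd}, and by hypothesis it holds on $U\cap\partial D$ as well, so $d_D$ is an everywhere-differentiable solution of $|\nabla d_D|=1$ on the open set $U$ (after shrinking $U$ so that $U\subset$ the set where $d_D$ is differentiable, which we may: differentiability at each boundary point together with the one-sided hypothesis of Proposition~\ref{bdrypt} is what forces $|\nabla d_D|=1$ there, and differentiability off the boundary is the given neighborhood). Applying the first part with $\Omega=U$ and $\varphi=d_D$ gives $d_D\in\mathcal C^{1,1}(U)$.

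The main obstacle, and the only genuinely delicate point, is the uniform backward-extension length $\rho$ in the second step: one must be sure that \emph{every} characteristic emanating from the relevant compact set can be prolonged a definite amount before reaching $\partial\Omega$, so that the constant $C\sim1/\rho$ is finite and uniform. This is where the affineness of characteristics (Step~1, borrowed from \cite{CC}) and a compactness/lower-semicontinuity argument for the escape time are essential; everything after that is the elementary square-root expansion and symmetrization sketched above.
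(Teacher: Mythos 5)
Your overall strategy (work directly from the affine characteristics of \cite[Lemma 2.2]{CC} plus the $1$-Lipschitz property, in the spirit of \cite{Ign2}) is viable and genuinely different from the paper, which instead identifies $\varphi$ locally with a distance function to one of its level sets via \cite[Proposition 4.4]{CC} and differentiates the projection formula of Proposition \ref{diffd}. But your execution has real errors, and the step you single out as the only delicate one is in fact trivial: characteristics are straight segments, so for $x\in B(x_0,r)$ the segment $x+t\nabla\varphi(x)$, $|t|\le r$, stays in $B(x_0,2r)\subset\Omega$; just take $\rho=r$, no lower semicontinuity or compactness needed. The first genuine error is the key inequality: with $x^-=x-\rho\nabla\varphi(x)$, the $1$-Lipschitz bound gives the \emph{upper} estimate $\varphi(y)\le\varphi(x^-)+|y-x^-|=\varphi(x)-\rho+|y-x+\rho\nabla\varphi(x)|$, i.e.\ semiconcavity $\varphi(y)\le\varphi(x)+\langle\nabla\varphi(x),y-x\rangle+|y-x|^2/(2\rho)$; your stated lower bound $\varphi(y)\ge\varphi(x)+\rho-|y-x+\rho\nabla\varphi(x)|$ is false in general (test $\varphi(z)=z_1$, $y=x-\eps e_1$) and is inconsistent with your own identity $\varphi(x^-)+\rho=\varphi(x)$. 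The matching lower bound (semiconvexity) requires the \emph{forward} point $x^+=x+\rho\nabla\varphi(x)$: $\varphi(y)\ge\varphi(x^+)-|y-x^+|$. Moreover, ``symmetrizing in $x,y$'' cannot produce the two-sided bound, because $\langle\nabla\varphi(y)-\nabla\varphi(x),y-x\rangle$ is invariant under swapping $x$ and $y$; you need both the $x^-$ and $x^+$ comparisons, written at $x$ and at $y$, and then added. Finally, discard the pointwise expansion $\varphi(y)=\varphi(x)+\langle\nabla\varphi(x),y-x\rangle+o(|y-x|)$: its error is not uniform in $x$ and is \emph{larger} than the $O(|y-x|^2)$ you need, so the asserted conclusion $\langle\nabla\varphi(y)-\nabla\varphi(x),y-x\rangle\ge -C|y-x|^2$ does not follow from it; the clean route is to add the two semiconcavity (resp.\ semiconvexity) inequalities, which gives $|\langle\nabla\varphi(y)-\nabla\varphi(x),y-x\rangle|\le|y-x|^2/\rho$ with a uniform constant.

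The most serious gap is the last line. A bound on $\langle\nabla\varphi(y)-\nabla\varphi(x),y-x\rangle$ controls only the component of the gradient difference along $y-x$, while what you must bound is $|\nabla\varphi(x)-\nabla\varphi(y)|^2=2\langle\nabla\varphi(x)-\nabla\varphi(y),\nabla\varphi(x)\rangle$, a component in a generally transverse direction; ``the same segment trick applied symmetrically'' is not an argument, and the naive attempt (plug the far point $x+t\nabla\varphi(x)$ into the quadratic bound at $y$) leaves an uncontrolled error $O(t^2/\rho)$ for the $t\sim\rho$ you would need. A correct completion is short but must be carried out: for instance, compare $x^+=x+\rho\nabla\varphi(x)$ with $y^-=y-\rho\nabla\varphi(y)$, so that $\varphi(x)-\varphi(y)+2\rho=\varphi(x^+)-\varphi(y^-)\le|x^+-y^-|$, bound $\varphi(x)-\varphi(y)\ge\langle\nabla\varphi(y),x-y\rangle-|x-y|^2/(2\rho)$ by semiconvexity at $y$, square, and use $|\nabla\varphi(x)+\nabla\varphi(y)|^2=4-|\nabla\varphi(x)-\nabla\varphi(y)|^2$ to obtain a quadratic inequality in $|\nabla\varphi(x)-\nabla\varphi(y)|$ whose resolution gives $|\nabla\varphi(x)-\nabla\varphi(y)|\le C|x-y|/\rho$; alternatively, invoke (with proof) the standard lemma that a two-sided uniform quadratic expansion at every point of a convex set forces $\nabla\varphi$ to be Lipschitz, obtained by testing the expansions with the increment $h=\lambda\bigl(\nabla\varphi(x)-\nabla\varphi(y)\bigr)$, $\lambda\sim\rho$. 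Without one of these, the proof is incomplete precisely at its central point. Your deduction of the statement about $d_D$ from the interior statement is fine.
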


\begin{proof}
We outline how to use the results of Caffarelli and Crandall \cite{CC} to prove
this proposition in our more special case (they consider a more general family of
norm and functions with a possible small singularity set).

By \cite[Lemma 2.2]{CC}, if $a<0<b$ and $x+t\nabla \varphi(x) \in U$ for $a<t<b$,
then for those values of $t$, $\nabla \varphi(x+t\nabla \varphi(x)) = \nabla \varphi(x)$
and $ \varphi(x+t\nabla \varphi(x)) = t$. Moreover, $\varphi \in \mathcal C^1(U)$.

The straight lines $\{ x+t\nabla \f (x), t \in \R\}$ are called \emph{characteristics}
of $\f$. For any $a \in \f(U)$, we denote the level set by $S_a:= \{x\in U: \f(x)=a\}$.

The following is a direct consequence of \cite[Proposition 4.4]{CC} and its proof.

\begin{lem}
\label{eikondist}
Let $V$ be an open set of the form
\[
\{ x+t\nabla \f (x), t \in (a(x),b(x)), x \in S_a\cap V\},
\]
where $a \in \f(U)$ and $a(x)<0<b(x)$.  Then for any $y\in V$, $d_{S_a}(y)= \f(y)-a$.
\end{lem}

In order to apply it to our situation, we need to see that any $x\in U$ admits a
neighborhood $V$ of the form above.
Let $x_0 \in U$.  Choose affine local coordinates $(x', x^m) \in \R^{m-1}\times \R$
 so that $x_0=0$ and $\nabla\f(x_0)=(0,\dots,0,1)$. Then $\Phi(x',x^m)= (x',\f(x))$
 is a local $\mathcal C^1$-diffeomorphism. In particular, for $r>0$
 small enough and $|x'|\le r$, there exists a unique $x^m$ such that
$\f(x',x^m)= \f(0)$.

We can choose $\d$ small enough so that
$$
U_r:= \left\{ x+t\nabla \f (x): x\in S_{\f(x_0)}, |x'| < r, |t| <r \right\}
=\Phi^{-1}\left( B^{m-1}(0,r) \times (-r,r) \right)
$$
is a open neighborhood of $x_0$ contained in $U$ for $r\le 3\delta$,
made up of a disjoint union of
segments of characteristics.  Let $a:= \f(x_0)-2\delta$, then if $x\in U_\d$,
we have $\f(x)= \dist (x, S_a) - 2 \d +f(x_0)$, so $\nabla \f(x)= \nabla d_{S_a} (x)$.

Using Proposition \ref{diffd}, for $x,y\in U_\d$,
\begin{multline*}
\nabla \f(x) - \nabla \f(y) =
 \frac{x-\pi_{S_a}(x)}{d_{S_a}(x)} - \frac{y-\pi_{S_a}(y)}{d_{S_a}(y)}\\
 = \frac1{d_{S_a}(x)} \left(  x-y - (\pi_{S_a}(x)-\pi_{S_a}(y))\right)
 - (x - \pi_{S_a}(x)) \frac{d_{S_a}(x)-d_{S_a}(y)}{d_{S_a}(x)d_{S_a}(y)},
\end{multline*}
so that, since $d_{S_a}(x), d_{S_a}(y) \ge \delta$, then
$|\nabla \f(x) - \nabla \f(y) | \le 2 \d^{-1} |x-y| + \d^{-1} |x-y|.$
\end{proof}

\begin{cor}
\label{main}
If $d_D$ is differentiable in a neighborhood $U$ of $p\in \partial D$, then
$d_D\in \mathcal C^{1,1}(U)$.
\end{cor}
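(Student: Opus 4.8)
The plan is to assemble the two preceding propositions. Proposition \ref{bdrypt} will supply the eikonal equation at the boundary points lying in $U$, and the first assertion of Proposition \ref{c11} will then upgrade everywhere-differentiability on the open set $U$ to $\mathcal C^{1,1}$ regularity.

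First I would check that $|\nabla d_D(x)| = 1$ for \emph{every} $x \in U$, splitting into two cases. If $x \in U \setminus \partial D$, then $d_D$ is differentiable at $x$, so Proposition \ref{diffd} gives $\nabla d_D(x) = (x - \pi(x))/\d_D(x)$ with $|x-\pi(x)| = \d_D(x)$, hence $|\nabla d_D(x)| = 1$. If instead $x \in U \cap \partial D$, choose an open ball $B$ centered at $x$ with $B \subset U$; since $x \in \partial D$ the set $B \cap D$ is nonempty, and $d_D$ is differentiable at $x$ and throughout $B \cap D$ because it is differentiable on all of $U$. Thus Proposition \ref{bdrypt}, applied with the point $x$ and the neighborhood $B$, yields $|\nabla d_D(x)| = 1$.

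Having established this, $\varphi := d_D$ is an everywhere differentiable solution of $|\nabla \varphi| = 1$ on the open set $U$. Applying the first part of Proposition \ref{c11} with $\Omega := U$ gives $d_D \in \mathcal C^{1,1}(U)$, as desired.

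There is no genuine obstacle here — all the real work sits inside Propositions \ref{bdrypt} and \ref{c11}. The one point that deserves a moment's attention is that one must invoke the \emph{first}, intrinsic statement of Proposition \ref{c11}, with $U$ itself serving as the ambient open set; its ``as a consequence'' form does not apply verbatim, since $U$ is only assumed to be a neighborhood of the single boundary point $p$, not of all of $\partial D$. One should also make sure the hypotheses of Proposition \ref{bdrypt} are verified at each boundary point of $U$ rather than just at $p$, but this is immediate from the openness of $U$ together with the standing assumption that $d_D$ is differentiable throughout $U$.
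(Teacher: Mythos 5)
Your argument is correct and is essentially the paper's own proof: Proposition \ref{diffd} gives the eikonal equation off $\partial D$, Proposition \ref{bdrypt} gives it at each boundary point of $U$, and the first part of Proposition \ref{c11} with $\Omega=U$ concludes. Your remark that one must use the intrinsic statement of Proposition \ref{c11} rather than its ``as a consequence'' form is a sensible clarification of what the paper does implicitly.
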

\begin{proof}
By Proposition \ref{diffd}, $|\nabla d_D|=1$ on $U\setminus \partial D$.
By Proposition \ref{bdrypt}, $|\nabla d_D|=1$ on $U\cap \partial D$. Then Proposition \ref{c11}
applies.
\end{proof}

\noindent{\bf Remark.} If we already know that  $d_D \in \mathcal C^1(U)$, for $U$ a neighborhood  of
$\partial D,$ then the hypotheses of the second part of Proposition \ref{c11}
are satisfied.  With that hypothesis,
$\partial D$ is $\mathcal C^1$-smooth (since $|\nabla d_D|=1$ on $U$ by
 continuity). In this case, as pointed out in \cite[p.~120]{KP}, K.~Lucas's work
\cite[Section 2]{Luc} implies that $\partial D$ is $\mathcal C^{1,1}$-smooth
(see also \cite[Theorem 4.18]{Fed})
and hence, by Proposition \ref{c1} below, $d_D$ is $\mathcal C^{1,1}$-smooth near $\partial D.$
But that proof is rather more roundabout than the arguments from \cite{CC}.
\smallskip

\section{Boundary regularity assumptions}
\label{bdry}

We now turn to weaker hypotheses, involving only the regularity of $\partial D$ itself.

\begin{prop}\label{c4} Let $p$ be a $\mathcal C^1$-smooth boundary point of a domain $D$ in $\R^m.$
Then $d_D$ is differentiable at $p$ and $\nabla d_D(p)$ is the inner unit normal vector to $\partial D$ at $p.$
\end{prop}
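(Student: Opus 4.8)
The plan is to work in local coordinates adapted to the $\mathcal C^1$ boundary at $p$. Since $\partial D$ is $\mathcal C^1$-smooth near $p$, after a translation and rotation we may assume $p=0$ and that near $0$ the boundary is a graph $x^m = g(x')$ with $g(0)=0$, $\nabla g(0)=0$, and $g\in\mathcal C^1$; moreover $D$ lies locally on the side $x^m>g(x')$, so that $\nu:=e_m=(0,\dots,0,1)$ is the inner unit normal at $0$. I would then show that for $x=te_m$ with $t>0$ small, $d_D(te_m) = t + o(t)$, and more generally that $d_D(x) = \langle x,\nu\rangle + o(|x|)$ as $x\to 0$; this immediately gives differentiability at $0$ with $\nabla d_D(0)=\nu$.

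The two directions of the estimate go as follows. For the upper bound, $d_D(x)\le |x - (x' , g(x'))| \le |x^m - g(x')| \le |x^m| + |g(x')|$, and since $g\in\mathcal C^1$ with $\nabla g(0)=0$ we have $g(x')=o(|x'|)=o(|x|)$; combined with $x^m = \langle x,\nu\rangle$ this yields $d_D(x) \le \langle x,\nu\rangle + o(|x|)$ when $x\in D$ (and the symmetric statement $-d_D(x)\le -\langle x,\nu\rangle + o(|x|)$ when $x\notin D$, i.e. $x^m<g(x')$). For the lower bound, I would argue that any nearest boundary point $\pi(x)=(y',g(y'))$ to a point $x\in D$ near $0$ must itself be close to $0$ (because $|\pi(x)|\le |\pi(x)-x|+|x|\le 2|x|$, using $d_D(x)\le|x|$), hence lies in the region where the $\mathcal C^1$-graph description and the smallness of $\nabla g$ are valid; then the outer normal direction at $\pi(x)$ is within $o(1)$ of $-e_m$, and since $x-\pi(x)$ is parallel to that normal (a standard fact for nearest points on a $\mathcal C^1$ hypersurface, or one can get it directly: $x-\pi(x)$ makes a small angle with $e_m$ because otherwise moving $\pi(x)$ slightly along the graph would decrease the distance), we get $d_D(x) = |x-\pi(x)| \ge \langle x,\nu\rangle - o(|x|)$. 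Putting the two bounds together gives $d_D(x)=\langle x,\nu\rangle + o(|x|)$, which is exactly differentiability at $p$ with the asserted gradient.

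The main obstacle, and the step requiring the most care, is the lower bound — specifically, controlling the direction of $x-\pi(x)$ uniformly as $x\to 0$. One clean way to do this without invoking extra regularity theory: fix a small cone around $e_m$; if $x=te_m$ and $\pi(x)$ were outside this cone, then writing $\pi(x)=(y',g(y'))$ with $|y'|$ comparable to $|\pi(x)|$, the point $(y', g(y') + \epsilon)$ (still essentially on the $D$-side for small $\epsilon$, or better, a genuine interior point slightly above it) would be strictly closer to $x$, contradicting minimality once $|x|$ is small enough that the relevant portion of $\partial D$ is trapped between two hyperplanes of tiny slope through $0$. Alternatively — and perhaps most economically — one can bypass $\pi(x)$ entirely and sandwich $D$ locally between the two half-spaces $\{x^m > \eps|x|\}$ and $\{x^m > -\eps|x|\}$ (valid in a ball $B(0,r_\eps)$ by $\mathcal C^1$-ness), note that $d_D(x)\ge d_{\{x^m>\eps|x|\}}(x)$-type comparisons give $d_D(te_m)\ge t(1-C\eps)$ for a half-space approximation, and let $\eps\to0$. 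I would write the proof using whichever of these is shortest; the half-space sandwiching is the most robust and avoids any appeal to uniqueness of projections.
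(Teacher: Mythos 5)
Your proposal is correct, and its overall scheme is the same as the paper's: pass to local $\mathcal C^1$ graph coordinates at $p$ and prove the two-sided estimate $d_D(x)=\langle x,\nu\rangle+o(|x|)$, the upper bound being obtained exactly as in the paper by comparing $x$ with its vertical projection onto the graph. The genuine difference is in the lower bound, which you correctly single out as the delicate step. You handle it either by controlling the direction of $x-\pi(x)$ (orthogonality to the tangent plane at a nearest point) or by trapping $\partial D$ near $p$ in a thin cone and estimating the distance to that cone; both work. The paper's trick is shorter and needs neither: if $\hat x$ is any nearest boundary point to $x$, then $|\hat x|\le |x|+d_D(x)\le 2|x|$, so $\hat x$ lies in the graph region and its first coordinate is $f(\hat x')=o(|x|)$; the crude coordinate inequality $d_D(x)=|x-\hat x|\ge |x_1-\hat x_1|$ then already gives $d_D(x)\ge x_1-o(|x|)$, with no angle control and no cone computation. (You in fact isolate the key point $|\pi(x)|\le 2|x|$ yourself; only the projection onto the normal coordinate is missing to shortcut your argument.) Two small cautions if you write up the cone version: the estimate must be established for all $x$ near $p$, not only for $x=t e_m$ (your sketch states the general claim but only computes the normal-line case, and differentiability needs the general one), and the comparison with the trapping cone is valid only after noting that the nearest boundary point lies in the ball where the trapping holds, which again follows from $d_D(x)\le |x|$. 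The case $x\notin D$ is treated symmetrically, as in the paper.
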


This does not necessarily extend to any neighborhood.
For $0<\alpha<1$ the domain $D_\alpha=\{x\in\R^2:x_1>|x_2|^{1+\alpha}\}$ is $\mathcal C^{1,\alpha}$- but not $\mathcal C^{1,1}$-smooth.
By Propositions \ref{diffd} and \ref{c4}, the function $d_D$ is non-differentiable at $x$ exactly when
$x=(x_1,0)$ with $x_1>0.$

\begin{proof} We may assume that that $D=\{x_1>f(x')\}$ near $p=0,$ where
$x=(x_1,x')\in \R\times \R^{m-1}$ and $f(x')=o(|x'|).$

Let $x\in D.$ Since $\tilde x=(f(x'),x')\in\partial D,$ then
$$
d_D(x)-x_1\le|x-\tilde x|-x_1=-f(x').
$$
On the other hand, let $\hat x\in\partial D$ be such that $d_D(x)=|x-\hat x|.$ Then
$$d_D(x)-x_1\ge|x_1-\hat x_1|-x_1\ge-|\hat x_1|=-|f(\hat x')|.$$
Since $|\hat x|\le|x|+d_D(x)\le 2|x|,$ it follows that
$d(x)-x_1=o(|x'|).$

Similar arguments imply the same for $x\not\in D$ which completes the proof.
\end{proof}

We now give a sufficient condition for the $\mathcal C^{1,1}$ smoothness of $d_D$
in terms of conditions on $\partial D$ only, along with a more precise estimate
of its second order variation. We need some notation.  For $p\in \partial D$ a point of the boundary
where the distance function $d_D$ is differentiable,
let $n_p:= \nabla d_D (p)$ denote the inner normal vector. We want to study 
the variation of the inner normal, and set 
$$
\chi_{D,p}:=\limsup_{\partial D\ni p',p''\to p}\frac{|n_{p'}-n_{p''}|}{|p'-p''|}.
$$

\begin{prop}\label{c1} Let $p$ be a $\mathcal C^{1,1}$-smooth boundary point of a domain
$D$ in $\R^m.$ Then $d_D$ is $\mathcal C^{1,1}$-smooth near $p.$ Moreover, for
$x, y$ near $p$ one has that
$$
|d_D(x)-d_D(y)-\langle\nabla d_D(x),x-y\rangle|\le(\chi_{D,p}/2+o(1))(|x-y|^2-(d_D(x)-d_D(y))^2).
$$
\end{prop}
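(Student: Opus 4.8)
The plan is to reduce Proposition \ref{c1} to the situation already handled inside the proof of Proposition \ref{c11}, namely that of a distance function to a level set which is a $\mathcal C^{1,1}$ hypersurface, and then to track the constant carefully. Since $p$ is a $\mathcal C^{1,1}$-smooth boundary point, $\partial D$ is, near $p$, of positive reach, so by Federer's theorem \cite{Fed} (or by the remark following Corollary \ref{main}) $d_D$ is $\mathcal C^{1,1}$ in a neighborhood $U$ of $p$, and moreover the projection $\pi=\pi_{\partial D}$ is well defined and Lipschitz on $U$, with $\nabla d_D(x)=(x-\pi(x))/d_D(x)$ on $U\setminus\partial D$ by Proposition \ref{diffd}, and $\nabla d_D(x)=n_{\pi(x)}$ on all of $U$ (including $\partial D$) by Proposition \ref{c4}. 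This already establishes the first assertion; the content is the inequality.

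For the estimate, I would first reduce to the case $x,y\in D$: the cases where one or both points lie in $\R^m\setminus\bar D$ are symmetric (replace $D$ by its complement, which flips the sign of $d_D$ and of $n$ but changes nothing in the inequality since everything is quadratic in the increments), and the boundary case $d_D=0$ is a limit. So assume $x,y\in D\cap U$. Write $d:=d_D$, $\pi':=\pi(x)$, $\pi'':=\pi(y)$, $n':=n_{\pi'}=\nabla d(x)$, $n'':=n_{\pi''}=\nabla d(y)$. The key identities are $x=\pi'+d(x)n'$ and $y=\pi''+d(y)n''$. Expanding,
\[
d(x)-d(y)-\langle\nabla d(x),x-y\rangle
= d(x)-d(y)-\langle n',\,\pi'-\pi''\rangle-d(x)+d(y)\langle n',n''\rangle.
\]
Since $\langle n',n''\rangle=1-\tfrac12|n'-n''|^2$, the last two terms combine to $-d(y)+d(y)-\tfrac12 d(y)|n'-n''|^2$, hence the whole left-hand side equals $-\langle n',\pi'-\pi''\rangle-\tfrac12 d(y)|n'-n''|^2$. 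Now $\langle n',\pi'-\pi''\rangle=\langle n'-n'',\pi'-\pi''\rangle+\langle n'',\pi'-\pi''\rangle$; I would exploit the analogous expansion with the roles of $x,y$ exchanged to symmetrize, and observe that $\langle n'',\pi'-\pi''\rangle$ is, up to a second order error, the derivative of the defining function along the hypersurface at $\pi''$, which vanishes to first order — more precisely, since $\partial D$ is $\mathcal C^{1,1}$ the graph estimate gives $|\langle n_{q''},q'-q''\rangle|\le(\chi_{D,p}/2+o(1))|q'-q''|^2$ for $q',q''\in\partial D$ near $p$. Combined with $|\langle n'-n'',\pi'-\pi''\rangle|\le(\chi_{D,p}+o(1))|\pi'-\pi''|^2$ (direct from the definition of $\chi_{D,p}$), this bounds the left-hand side by $(\chi_{D,p}/2+o(1))(|\pi'-\pi''|^2+d(y)|n'-n''|^2+\dots)$, and the remaining task is purely algebraic: show $|\pi'-\pi''|^2+(\text{the }n\text{-terms})\le |x-y|^2-(d(x)-d(y))^2$ up to $o(1)$ factors.

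The main obstacle, and where I would spend most of the effort, is precisely that last algebraic identity: proving that the natural quadratic form in the increments $(\pi'-\pi'',\,d(x)-d(y),\,n'-n'')$ that emerges from the expansion is bounded by $|x-y|^2-(d(x)-d(y))^2$ with the sharp constant $\chi_{D,p}/2$ and no worse. The useful observation here is that $|x-y|^2-(d(x)-d(y))^2 = |x-y|^2 - \langle\nabla d(\xi),x-y\rangle^2$ along an intermediate point, which is (to leading order) the squared length of the component of $x-y$ \emph{tangent} to the level sets of $d$; and $\pi'-\pi''$ differs from the tangential component of $x-y$ by terms of size $d\cdot|n'-n''|$, which is where the $\chi_{D,p}$ enters and must be fed back in consistently. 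I would set $h:=x-y$, decompose $h=h_\parallel+h_\perp$ with respect to $n'$ (or a symmetric average of $n',n''$), use $|h_\perp|=|d(x)-d(y)|+O(|h|^2)$ and $|h_\parallel|^2=|x-y|^2-(d(x)-d(y))^2+O(|h|^3)$, express $\pi'-\pi''=h-\bigl(d(x)n'-d(y)n''\bigr)=h_\parallel - d(y)(n'-n'')+O(|h|^2)$, and then verify by expanding the square that all cross terms either vanish by the defining-function estimate or get absorbed into the $o(1)$. Once the bookkeeping of which errors are $O(|h|^2)$ versus $o(|h|^2)$ is organized — using that $n$ is Lipschitz in its argument and $\pi$ is Lipschitz, so $|n'-n''|=O(|h|)$ — the inequality follows; the $o(1)$ in the statement absorbs both the $o(1)$ from the $\mathcal C^{1,1}$ (not $\mathcal C^2$) regularity of $\partial D$ near $p$ and the lower-order discrepancy between $|\pi'-\pi''|$ and $|h_\parallel|$.
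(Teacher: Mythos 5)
Your skeleton coincides with the paper's: unique projection near a $\mathcal C^{1,1}$ point, the identities $x=\tilde x+d_D(x)n_{\tilde x}$, $y=\tilde y+d_D(y)n_{\tilde y}$, the resulting expansion $d_D(x)-d_D(y)-\langle n_{\tilde x},x-y\rangle=-\langle n_{\tilde x},\tilde x-\tilde y\rangle-(1-\langle n_{\tilde x},n_{\tilde y}\rangle)d_D(y)$, and the graph/Taylor estimate $|\langle n_{\tilde x},\tilde x-\tilde y\rangle|\le(\chi_{D,p}/2+o(1))|\tilde x-\tilde y|^2$. The genuine gap is in the step you yourself single out as the main obstacle: converting a bound by $|\tilde x-\tilde y|^2$ into the stated bound by $|x-y|^2-(d_D(x)-d_D(y))^2$. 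Your bookkeeping uses additive errors in powers of $|h|=|x-y|$ (``$|h_\parallel|^2=|x-y|^2-(d_D(x)-d_D(y))^2+O(|h|^3)$'', ``$\pi'-\pi''=h_\parallel-d(y)(n'-n'')+O(|h|^2)$''), and such errors cannot be ``absorbed into the $o(1)$'': when $x,y$ lie on or near a common normal line, $|x-y|^2-(d_D(x)-d_D(y))^2$ is zero or far smaller than $|h|^3$, so the right-hand side cannot swallow any additive remainder measured in $|h|$. What makes the sharp statement work is that every error must be controlled \emph{relative to the tangential displacement}: expanding exactly
\[
|x-y|^2=|\tilde x-\tilde y|^2+(d_D(x)-d_D(y))^2+2\left[d_D(x)\langle n_{\tilde x},\tilde x-\tilde y\rangle+d_D(y)\langle n_{\tilde y},\tilde y-\tilde x\rangle+d_D(x)d_D(y)(1-\langle n_{\tilde x},n_{\tilde y}\rangle)\right],
\]
each bracketed term is $o(1)\,|\tilde x-\tilde y|^2$ (because $d_D(x),d_D(y)=o(1)$ and by the same two estimates you already have), giving $|\tilde x-\tilde y|^2=(1+o(1))\left(|x-y|^2-(d_D(x)-d_D(y))^2\right)$ with no additive remainder at all. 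Your orthogonal-decomposition plan can only be repaired by redoing it so that all discrepancies are $o(1)|\tilde x-\tilde y|^2$, which in effect is the exact expansion above; as written, the step would fail.

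Three smaller points. First, the splitting $\langle n',\pi'-\pi''\rangle=\langle n'-n'',\pi'-\pi''\rangle+\langle n'',\pi'-\pi''\rangle$ loses the constant: it yields $3\chi_{D,p}/2$, not $\chi_{D,p}/2$; apply the graph estimate directly to $\langle n',\pi'-\pi''\rangle$ (Taylor with integral remainder for $t\mapsto f((1-t)\tilde x'+t\tilde y')$, as in the paper). Second, citing the remark after Corollary \ref{main} for the first assertion is circular, since that remark itself invokes Proposition \ref{c1}; nor is Federer's qualitative statement needed, because the displayed estimate (with $n_{\tilde x}$ in place of $\nabla d_D(x)$) itself proves that $d_D$ is differentiable with $\nabla d_D(x)=n_{\tilde x}$, which is Lipschitz in $x$ — all you need a priori is uniqueness of the projection near $p$ and finiteness of $\chi_{D,p}$. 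Third, the reduction to $x,y\in D$ is unnecessary, and the mixed case $x\in D$, $y\notin\overline D$ is not actually covered by your complement-swapping symmetry; the signed identity $x=\tilde x+d_D(x)n_{\tilde x}$ is valid on both sides of $\partial D$ and handles all cases simultaneously.
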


\begin{proof}
Recall that when $q\in \partial D$, $\nabla d_D(q)=n_q$, the unit inner normal vector to $\partial D$ at
$q\in\partial D$. Set $\tilde x$ to be the projection of $x$ near $p$ on $\partial D$
(it is unique by $\mathcal C^{1,1}$-smoothness). Then
\begin{multline*}
\langle n_{\tilde x},x-y\rangle=\langle n_{\tilde x},\tilde x+d_D(x)n_{\tilde x}-\tilde y-d_D(y)n_{\tilde y}\rangle=
\\
d_D(x)-d_D(y)+\langle n_{\tilde x},\tilde x-\tilde y\rangle+(1-\langle n_{\tilde x},n_{\tilde y}\rangle)d_D(y).
\end{multline*}
Let $\chi_0=\chi_{D,p}.$
Since $|n_{\tilde x}-n_{\tilde y}|\le(\chi_0+o(1))|\tilde x-\tilde y|,$ it follows that
$$
1-\langle n_{\tilde x},n_{\tilde y}\rangle\le(\chi_0^2/2+o(1))|\tilde x-\tilde y|^2.
$$

To estimate $\langle n_{\tilde x},\tilde x-\tilde y\rangle,$
we may assume that $p=0$ and
$\partial D$ near $0$ is given by $u_1=f(u'),$ where $u':=(u_2, \dots, u_m)$,
 for $|u'|<\varepsilon_0,$ with $f(0)=0,$ and $\nabla f(0)=0.$

For $\tilde x= (f (\tilde x'), \tilde x')$,
$(1,-\nabla f (\tilde x'))=  \sqrt{1+|\nabla f(\tilde x')|^2} \, n_{\tilde x}
=(1+o(1)) n_{\tilde x}$, so
\begin{equation}
\label{nablest}
|\nabla f(u')-\nabla f(v')|\le(\chi_0+o(1))|u'-v'|.
\end{equation}

Then
$$
\sqrt{1+|\nabla f(\tilde x')|^2}\langle n_{\tilde x},\tilde x-\tilde y\rangle=
f(\tilde x')-f(\tilde y')-\langle\nabla f(\tilde x'),\tilde x'-\tilde y'\rangle.
$$
Writing $g(t):= f((1-t) \tilde x' + t \tilde y')$, $g\in \mathcal C^{1,1}$, and we
need to estimate  $|g(1)-g(0)-g'(0)|$; by Taylor's formula with integral remainder,
it is bounded by \[
\frac12 |\tilde x'-\tilde y'|^2 \sup_{[0,1]} |g''| = \frac12(\chi_0+o(1))|\tilde x'-\tilde y'|^2,
\]
by \eqref{nablest}. Finally
\[
|\langle n_{\tilde x},\tilde x-\tilde y\rangle | \le (\chi_0/2+o(1)) |\tilde x'-\tilde y'|^2
\le (\chi_0/2+o(1)) |\tilde x-\tilde y|^2,
\]
thus
\begin{multline*}
\left| d_D(x)-d_D(y)-\langle n_{\tilde x},x-y\rangle \right| =
\left| \langle n_{\tilde x},\tilde y-\tilde x\rangle - (1-\langle n_{\tilde x},n_{\tilde y}\rangle)d_D(y) \right|
\le (\chi_0/2+o(1)) |\tilde x-\tilde y|^2.
\end{multline*}

It remains to compare $|\tilde x-\tilde y|^2$ with $|x-y|^2$. But
\begin{multline*}
|x-y|^2=
\left| (\tilde x-\tilde y) + (d_D(x)-d_D(y))  n_{\tilde x} + d_D(y) (n_{\tilde x}-n_{\tilde y}) \right|^2
\\=|\tilde x-\tilde y|^2+(d_D(x)-d_D(y))^2
+2\big[d_D(x)\langle n_{\tilde x},\tilde x-\tilde y\rangle+d_D(y)\langle n_{\tilde y},
\tilde y-\tilde x\rangle+d_D(x)d_D(y)(1-\langle n_{\tilde x},n_{\tilde y}\rangle) \big]
\\
=(1+o(1))|\tilde x-\tilde y|^2+(d_D(x)-d_D(y))^2.
\end{multline*}
So
$$
\left| d_D(x)-d_D(y)-\langle n_{\tilde x},x-y\rangle \right|\le(\chi_0/2+o(1))(|x-y|^2-(d_D(x)-d_D(y))^2).$$
Hence $d_D$ is $\mathcal C^{1,1}$-smooth near $p$ and $\nabla d_D(x)=n_{\tilde x}.$
\end{proof}

\noindent{\bf Acknowledgements.} We wish to thank our colleague Radu Ignat for useful 
discussions about the eikonal equation. 
He also has a direct proof for the $\mathcal C^{1,1}$ interior regularity 
of solutions of the eikonal equations in any dimension \cite{Ign2}.

\end{document}